\numberwithin{equation}{section}
\newtheorem{theorem}{Theorem}[section]
\newtheorem{lemma}[theorem]{Lemma}
\newtheorem{proposition}[theorem]{Proposition}
\newtheorem{definition-lemma}[theorem]{Definition-Lemma}
\theoremstyle{definition}
\newtheorem{definition}[theorem]{Definition}
\newtheorem{example}[theorem]{Example}
\theoremstyle{remark}
\newtheorem{remark}[theorem]{Remark}
\def\FF{{\mathbb F}}
\def\PP{{\mathbb P}}
\def\QQ{{\mathbb Q}}
\def\ZZ{{\mathbb Z}}
\def\cO{{\mathcal O}}
\newcommand\la{\langle}
\newcommand\ra{\rangle}
\begin{document}
\title
{Optimal Curves over Finite Fields with Discriminant -19}

\author{E.\ Alekseenko , S.\ Aleshnikov , N.\ Markin,  A.\ Zaytsev}

\maketitle
\begin{abstract}
In this work we study the properties of maximal and minimal curves of genus $3$ over finite fields with discriminant $-19$. 
We prove that any such curve can be given by an explicit equation of certain form (see Theorem \ref{equations}).
Using these equations we obtain a table of maximal and minimal curves over prime finite fields with discriminant $-19$ of cardinality up to $997$.
We also show that existence of a maximal curve implies that there is no minimal curve and vice versa.

\end{abstract}

\begin{section}{Introduction}

The number of rational points of an irreducible non-singular projective curve $C/\FF_q$ of genus $g$ satisfies the Hasse-Weil-Serre bound:
$$|\#C(\FF_q)-q-1|\leq g[2\sqrt{q}].$$

In case of equality, that is,   $\#C(\FF_{q})=q+1\pm g[2\sqrt{q}]$
the curve is called {\emph{optimal  over $\FF_q$}}. When $\#C(\FF_{q})=q+1- g[2\sqrt{q}]$ it is called a {\emph{maximal over $\FF_q$}} curve and when 
$\#C(\FF_{q})=q+1- g[2\sqrt{q}]$  it is called a {\emph{minimal over $\FF_q$}} curve.

Let $C$ be an optimal  curve  of genus $g$ over $\FF_{q}$. 
Then the  Frobenius  endomorphism induces a homomorphism 
$$
{\rm F}:{\rm T}_l {\rm Jac}(C)\otimes_{\ZZ}{\QQ}\rightarrow {\rm T}_l {\rm Jac}(C)\otimes{_\ZZ}{\QQ},
$$ 
where ${\rm T}_l {\rm Jac}(C)$ is the projective limit ${\rm \underleftarrow{lim} Jac}(C)[l^{n}]$.  
Moreover, if the characteristic polynomial of ${\rm Jac}(C)$  splits                            
$$
{\rm P}_{{\rm Jac}(C)}(T)=\prod ^{2g}_{i=1}(T-\alpha_{i}),
$$ then the number of rational points on $C$ equals to 
$$
\#C(\FF_q)=q+1-\sum ^{g}_{i=1}\alpha_{i}=q+1-\sum ^{g}_{i=1}(\alpha_i+\overline{\alpha}_i),
$$ 
with $\alpha_{i+g}=\overline{\alpha}_i$. The eigenvalues of the endomorphism Frobenius ${\rm F}$ have following property: ${\alpha_{i}}+ {\bar\alpha_{i}= -{[2\sqrt{q}]}}$ when $C$ is  maximal  and ${\alpha_{i}}+ {\bar\alpha_{i}= {[2\sqrt{q}]}}$ when $C$ is minimal. Therefore, if a curve is optimal, then $L$-polynomial of this curve is 
$$
L(t)=\prod ^{2g}_{i=1}(1 - \alpha_it)=\prod^{g}_{i=1}(1 \mp [2\sqrt{q}]t +qt^{2}),
$$  
where the minus sign applies to the minimal case and the plus sign
to the maximal case.
Then the theory of Honda-Tate shows that the Jacobian $\rm Jac(C)$ of a 
maximal 
curve $C$ is isogenous to a product of copies of a maximal
elliptic curve, that is  ${\rm Jac}(C)\sim E^{g}$, where $E$ is a maximal 
elliptic curve over a finite field $\FF_q$.  The similar isogeny occurs if the curve
$C$ and the elliptic curve $E$ are minimal.
 The isogeny class of $E$ over a finite field $\FF_q$ is characterized  by the characteristic polynomial of the Frobenius endomorphism of $E$. 

The definition of the discriminant of a finite field is recalled.
\begin{definition}
For a finite field $\FF_{q}$, the number $[2\sqrt{q}]^2-4q$ is called 
the \emph{discriminant} of a finite field $\FF_q$.
\end{definition}

Now, we consider the equivalence between the category of ordinary abelian varieties ${\rm Jac}(C)$ over $\FF_q$ which are isogenous  to $ E^{g}$ (hence $E$ is ordinary)  and the category of $R$-modules, where $R$ is the ring define by the Frobenius endomorphism of $E$.  In our case let $C$ be a smooth irreducible projective algebraic curve over  $\FF_q$ with discriminant $-19$. Therefore $R = \mathcal O_K$, where $K = \mathbb Q(\sqrt{-19})$. Let ${\rm Jac}(C)$ be the principal polarized Jacobian variety of $C$ with theta-divisor $\theta$. By the Torelli Theorem, the curve $C$ is completely defined by $({\rm Jac}(C),\theta)$, up to a unique isomorphism over an algebraic closure of $\FF_q$. Consider the Hermitian module $(\cO^{g}_{K};h)$, where $\cO^{g}_{K}$ is a $\cO_{K}$-module, and $h:\cO^{g}_{K}\times\cO^{g}_{K}\rightarrow \cO_K$ is a Hermitian form. The equivalence of categories is defined by the functor $\mathcal F: {\rm Jac}(C)\longrightarrow {\rm Hom}(E,{\rm Jac}(C))$ and its inverse  $\mathcal V:\cO^{g}_{K}\longrightarrow \cO^{g}_{K} \otimes _{\cO_{K}}E$. Under this equivalence the principal polarisation of Jacobian ${\rm Jac}(C)$ corresponds to an irreducible Hermitian $\cO_K$-form $h$. Therefore we can  use the classification of unimodular irreducible Hermitian forms in order to study the isomorphism classes of $\rm Jac(C).$ For detailed description of this equivalence of categories see the Appendix by J.-P. Serre in \cite{KL}.

Deligne's Theorem \cite{PD} yields that the number of isomorphism classes of abelian varieties isogenous to $A$ equals the number of isomorphism classes of $R$-modules, which may be embedded as lattices in the $K$-vector space $K^{g}$, where $K={\rm Quot}(R)$. Since in our case there exists one isomorphism class of  such $R$-modules, then there exists a unique isomorphism class of abelian varieties. Therefore Deligne's Theorem together with \ref{elliptic} show that  ${\rm Jac}(C)$ is actually isomorphic to $E^{g}$.

The main result of this paper is putting this theory to practical use. We give a characterization of isomorphism classes of optimal curves of genus $3$ over finite fields with discriminant $-19$ in such a way that we are able to give an explicit description of all such curves. In particular, we produce maximal and minimal curves of genus $3$ over prime finite fields with discriminant $-19$ of cardinality up to $997$.

We also would like to mentioned recent article of Christophe Ritzenthaler  "Explicit computations of Serre's obstruction for genus 3 curves and application to optimal curves " (see \cite{CR}). 
Beside of all other interesting results of this article we would like to point out the method of detecting maximal and minimal curves and the model of a modular curve which can be reduced to optimal curve over finite field with discriminant $-19$. 

\end{section}

\begin{section}{Optimal Curves of Genus $1$ and $2$}

\subsection{Optimal Elliptic Curves}
In this subsection we explore optimal elliptic curves over $\FF_{q}$ and produce 
concrete calculations for the finite fields $\FF_{q}$ of the discriminant $-19$ and  $q \le 1000$.

The endomorphism ring ${\rm End}(E)$ of an elliptic curve $E$ is the set of all isogenies $\phi:E({\overline\FF}_q)\rightarrow E({\overline\FF}_q)$, with multiplication corresponding to composition. If a curve $E$ has complex multiplication, then by Deuring's theory \cite{WW} the endomorphism ring ${\rm End}(E)$ is an  order in the imaginary quadratic field $K=\QQ(\sqrt{d})$. The theory of complex multiplication and Deuring's lifting theory 
give us the following: given a quadratic field $K$, the number of isomorphism classes of elliptic curves over $\FF_q$ whose endomorphism rings are isomorphic to the maximal order $\cO_K$ is equal the number of ideal classes $h_K$ of  $K$. 

\begin{proposition}\label{elliptic}
Let $\FF_q$ be a finite field with discriminant $-19$. 
There exist exactly two $\FF_q$-isomorphism classes of optimal elliptic curves $E$
over $\FF_{q}$, namely the class of maximal and the class of minimal elliptic curves over $\FF_q$.
\end{proposition}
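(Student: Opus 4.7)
The plan is to combine Deuring's correspondence, as recalled in the introduction, with the classical fact that $K = \mathbb{Q}(\sqrt{-19})$ is one of the Heegner fields, so $h_K = 1$. The hypothesis "$\FF_q$ has discriminant $-19$" should be read as follows: for an optimal elliptic curve $E/\FF_q$ the Frobenius trace $a = \mp[2\sqrt{q}]$ (minimal resp.\ maximal case) satisfies $a^2 - 4q = -19$, so the characteristic polynomial $T^2 - aT + q$ has discriminant $-19$. Since $-19 \equiv 1 \pmod 4$, this is already $\mathrm{disc}(\cO_K)$, hence $\ZZ[F] = \cO_K$ inside $\End(E)\otimes\QQ$; in particular $E$ is ordinary and $\End(E) = \cO_K$.

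First I would note that the two sign choices $a = +[2\sqrt q]$ and $a = -[2\sqrt q]$ produce two distinct Weil $q$-numbers, and therefore, by Honda--Tate, two distinct $\FF_q$-isogeny classes: the minimal one and the maximal one. Both really occur, because a quadratic twist takes an optimal curve of one type to an optimal curve of the other, so as soon as the hypothesis furnishes a single optimal curve both isogeny classes are populated.

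Next I would count isomorphism classes inside each isogeny class using the Deuring--Deligne equivalence recalled in the Introduction. For an ordinary elliptic curve $E/\FF_q$ whose Frobenius satisfies $\ZZ[F] = \cO_K$, the endomorphism ring must coincide with $\cO_K$, and the equivalence turns $\FF_q$-isomorphism classes in the isogeny class of $E$ into isomorphism classes of rank-one projective $\cO_K$-modules, i.e.\ elements of the class group $\mathrm{Cl}(K)$. Since $-19$ is one of the nine imaginary quadratic discriminants with $h_K = 1$, each of the two isogeny classes contains exactly one $\FF_q$-isomorphism class, and summing gives the two classes of the proposition.

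The main obstacle is essentially bookkeeping: one must be sure that "$\FF_q$-isomorphism classes in the isogeny class" are counted by $h_K$ itself and not by some variant such as $2h_K$ or $h_{\cO}$ for a non-maximal order $\cO \subsetneq \cO_K$. The two facts that rule this out are exactly the ones derived above, namely that $E$ is ordinary (so Deligne's equivalence applies and there is no extra factor coming from the supersingular locus) and that $\ZZ[F]$ is already the maximal order (so no non-maximal order of $K$ can appear as $\End(E)$); both follow at once from $-19$ being a fundamental discriminant coprime to $\Char(\FF_q)$.
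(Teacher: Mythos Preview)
Your proposal is correct and follows essentially the same route as the paper: identify $\End_{\FF_q}(E)$ with the maximal order $\cO_K$ of $K=\QQ(\sqrt{-19})$, use $h_K=1$, and conclude that each of the two (maximal/minimal) isogeny classes contains a single $\FF_q$-isomorphism class. The only cosmetic differences are that the paper verifies $h_K=1$ via Minkowski's bound rather than citing the Heegner list, and it phrases the counting step as ``class number plus mass formula'' rather than as Deligne's equivalence; your version is in fact slightly more explicit about why $\ZZ[F]$ is already maximal and why $E$ is ordinary.
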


\begin{proof}

Deuring's Theorem provides the existence of maximal and minimal elliptic curves over a finite field with discriminant $-19$. Let $E$ be such a curve. Then $\rm End_{\FF_q}(E)$ contains the ring $\mathbb Z[x]/(x^2+mx +q)$, where $m = \pm
[2 \sqrt{q}]$ is the trace of the Frobenius endomorphism of $E$. Therefore we have ${\rm End_{\FF_q}(E)} \cong \cO_K \cong \mathbb Z[x]/(x^2+mx +q)$, where $K$ is the imaginary quadratic field $\QQ(\sqrt{-19})$ with discriminant $-19$.
We have $\cO_K = \ZZ[\frac{-19+\sqrt{-19}}{2}]$ and the Minkovski bound is $B_K \approx 2,77$. Then any non-principal ideal class must be representable by an ideal of norm $\leq 2,77$. We verify that $2\cO_K$ is a principal ideal to conclude that $h_K = 1$. 
From the class number and the mass formula (see \ref{GG})
it follows that there exists a unique class of isomorphic elliptic curves over $ \FF_q$.
\end{proof}

\begin{remark}
Alternatively, we can find the number of $\FF_q$-isomorphism classes of elliptic curves over $\FF_q$ within a given isogeny class by using the following properties. 
In case when two elliptic curves are given by~$E : \,y^{2}=x^{3}+ax+b$
 and
 $E\acute{}:\,y^{2}=x^{3}+a\acute{}x+b\acute{},$ then $E\cong E\acute{}$ over $\FF_q$ if and only if the the following relations on the coefficients hold: $a\acute{}=ac^{4}, b\acute{}=bc^{6}$ for a some $c\in\FF_q$.  

\end{remark}

\begin{example}
We give examples of maximal  and minimal elliptic curves
over finite fields over $\FF_{q}$ with  discriminant $-19$ for all $q <1000.$\\
\begin{tabular}{|c|l|l|}
\hline
${q}$& Maximal & Minimal \\
\hline
${47}$& $y^2=x^3+x+38$& $y^2=x^3+32x+27$\\[2pt]
${61}$& $y^2=x^3+6x+29$& $y^2=x^3+32x+57$\\[2pt]
${137}$& $y^2=x^3+x+36$& $y^2=x^3+61x+47$\\[2pt]
${277}$& $y^2=x^3+2x+61$& $y^2=x^3+61x+47$\\[2pt]
${311}$& $y^2=x^3+x+50$& $y^2=x^3+18x+308$\\[2pt]
${347}$& $y^2=x^3+2x+96$& $y^2=x^3+174x+12$\\[2pt]
${467}$& $y^2=x^3+2x+361$& $y^2=x^3+234x+337$\\[2pt]
${557}$& $y^2=x^3+3x+132$& $y^2=x^3+140x+295$\\[2pt]
${761}$& $y^2=x^3+x+82$& $y^2=x^3+592x+454$\\[2pt]
${997}$& $y^2=x^3+6x+493$& $y^2=x^3+500x+934$\\[2pt]
\hline
\end{tabular}
\end{example}

\subsection{Optimal Curves of Genus $2$} 
We start with a proposition which was proven in \cite{AZ}. 
\begin{proposition}\label{genustwo19}
Up to an isomorphism over the field ${\FF}_{q}$
there exists exactly one maximal  (resp.\ minimal) optimal curve
$C$ of genus $2$ over ${\FF}_{q}$, viz., the fibered product over ${\PP}^1$
of the two maximal (resp.\ minimal) optimal elliptic curves
$$
E_1: y^2=f(x) \quad {\rm   and} \quad  E_2: y^2=f(x)(\alpha\, x+\beta).
$$
\end{proposition}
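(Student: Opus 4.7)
The plan is to combine the equivalence of categories recalled in the introduction with Torelli's theorem, and then to realize the resulting abstract curve explicitly as a fibered product. The $L$-polynomial computation from the introduction forces ${\rm Jac}(C)$ of any optimal genus-$2$ curve $C$ to be isogenous to $E^{2}$, where $E$ is the unique maximal (resp.\ minimal) elliptic curve of Proposition~\ref{elliptic}. Since $h_{K}=1$ for $K=\QQ(\sqrt{-19})$, Deligne's theorem promotes this isogeny, via the functor $\mathcal{F}$, to an isomorphism ${\rm Jac}(C)\cong E^{2}$ of unpolarized abelian varieties; the principal polarization then transports to an irreducible unimodular Hermitian $\cO_{K}$-form $h$ on $\cO_{K}^{2}$.

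The uniqueness assertion reduces to showing that, up to $\GL_{2}(\cO_{K})$-equivalence, there is a single irreducible unimodular Hermitian form of rank~$2$ over $\cO_{K}$. I would establish this by diagonalizing $h$ over the PID $\cO_{K}$, using unimodularity and positive-definiteness to control the diagonal entries, and invoking the irreducibility of the Theta divisor of ${\rm Jac}(C)$ to exclude the orthogonal direct-sum form (which would correspond to a decomposable principally polarized surface, not to a smooth curve). Combined with Torelli's theorem, this pins $C$ down up to $\FF_{q}$-isomorphism.

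For the explicit model, I would exploit the decomposition ${\rm Jac}(C)\sim E_{1}\times E_{2}$ (with each $E_{i}$ $\FF_{q}$-isomorphic to $E$ by Proposition~\ref{elliptic}), which yields two degree-$2$ morphisms $C\to E_{i}$. The Frey--Kani construction then realizes $C$ as the normalization of $E_{1}\times_{\PP^{1}}E_{2}$, with the two arrows to $\PP^{1}$ being the quotients by the hyperelliptic involutions. Placing $E_{1}$ in short Weierstrass form $y^{2}=f(x)$, with $f$ a separable cubic in a common affine coordinate $x$, a Riemann--Hurwitz analysis on the resulting $V_{4}$-cover $C\to\PP^{1}$ (whose total ramification must be $10$ for $g(C)=2$) forces the branch divisor of $E_{2}$ to extend that of $E_{1}$ by exactly one additional $\FF_{q}$-rational affine point; after rescaling, this gives $E_{2}\colon y^{2}=f(x)(\alpha x+\beta)$. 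The main obstacle I anticipate is the Hermitian classification step in the second paragraph; once that is settled, both the uniqueness assertion and the shape of the equations follow, with optimality of the fibered product reading off from the Jacobian decomposition.
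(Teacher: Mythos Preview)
The paper itself does not prove this proposition; it merely quotes it from \cite{Zaytsev2}. So there is no in-paper argument to compare your proposal against directly. That said, your overall strategy---Deligne's equivalence together with the classification of rank-$2$ unimodular Hermitian $\cO_K$-lattices, Torelli to recover $C$, and then an explicit $V_4$-cover realization---is precisely the template the present paper applies in genus~$3$ (Sections~3--5), and is almost certainly what \cite{Zaytsev2} does as well.

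Two places deserve tightening. First, your plan to ``diagonalize $h$ over the PID $\cO_K$'' does not work as stated: integral Hermitian lattices need not be diagonalizable, and in fact the unique indecomposable unimodular rank-$2$ class over $\cO_{\QQ(\sqrt{-19})}$ is represented by a genuinely non-diagonal Gram matrix such as $\left(\begin{smallmatrix}2&\omega\\ \bar\omega&3\end{smallmatrix}\right)$ with $\omega=\tfrac{1+\sqrt{-19}}{2}$; you should appeal to the classification (e.g.\ \cite{Schiemann}) rather than to a diagonalization argument. Second, the isogeny ${\rm Jac}(C)\sim E_1\times E_2$ does not by itself hand you two degree-$2$ maps $C\to E_i$. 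What Proposition~\ref{proj} actually gives is a single degree-$2$ projection (the relevant $1\times1$ minor is $h_{11}=2$; the other projection has degree $h_{22}=3$), whose covering involution $\sigma$ together with the hyperelliptic involution $\iota$ generates the $V_4$; the second elliptic quotient is then $C/\langle\sigma\iota\rangle$. Once the third paragraph is rephrased this way, your Riemann--Hurwitz count and the fibered-product model $E_1\times_{\PP^1}E_2$ go through exactly as you outline.
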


\begin{example}
Here we produce examples of elliptic curves $E_2$ from the proposition above and maximal curve of genus $2$
over the finite field $\FF_{q}$ of the discriminant $-19$ and $q<1000$.\\
\begin{tabular}{|c|l|l|}
\hline
${q}$& Maximal elliptic curve& Maximal curve of genus two\\
\hline
${47}$&$y^2=(x^3+x+38)(x+30)$ & $z^2=x^6+4x^4+22x^2+33$\\[2pt]
${61}$& $y^2=(x^3+6x+29)(x+2)$ & $z^2=x^6+55x^4+18x^2+9$\\[2pt]
${137}$& $y^2=(x^3+x+36)(x+18)$& $z^2=x^6+83x^4+14x^2+77$\\[2pt]
${277}$&$y^2=(x^3+2x+61)(2\,x+80)$ & $z^2=104x^6+247x^4+185x^2+245$\\[2pt]
${311}$&$y^2=(x^3+x+50)(x+134)$& $z^2=x^6+220x^4+66x^2+19$\\[2pt]
${347}$& $y^2=(x^3+2x+96)(x+166)$ & $ z^2=x^6+196x^4+84x^2+316$\\[2pt]
${467}$&$y^2=(x^3+2x+361)(x+47)$& $z^2=x^6+326x^4+91x^2+118$\\[2pt]
${557}$&$y^2=(x^3+3x+132)(2x+266)$& $z^2=209x^6+318x^4+356x^2+421$\\[2pt]
${761}$&$y^2=(x^3+3x+132)(x+257)$& $z^2=x^6+751x^4+288x^2+98$\\[2pt]
${997}$&$y^2=(x^3+3x+132)(x+760)$& $z^2=x^6+711x^4+20x^2+30$\\[2pt]
\hline
\end{tabular}

Note that the corresponding minimal curves of genus $2$ can be obtained by twisting of maximal curves. 

\end{example}

\end{section}


\begin{section}{A degree of a projection}
We can  calculate  the degree of the maps $C \to E$, obtained via the embedding of $C$ into ${\rm  Jac}(C)\cong E^g$ and projections onto $E$. 

The following result can be found in \cite{AZ}, we include it here with the proof for the sake of completeness.
Note that proof relies on the fact that the hermitian lattice corresponding to $\rm Jac(C)$ is a free $\cO_K$-module, which holds in the case when $\FF_q$ has discriminant $-19$. 

\begin{proposition}\label{proj}
Let $C$ be an optimal curve over $\FF_q$. 
Fix an isomorphism ${ \rm  Jac}(C)\cong E^g$ such that the theta 
divisor corresponds to the hermitian form $(h_{ij})$ on $\cO_{K}^g$
on the canonical lift of ${\rm  Jac}(C)$. 
Then degree of the $k$-th projection 
\begin{equation*}
f_{k}:C\hookrightarrow {\rm  Jac}(C)\cong E^g \stackrel{pr_k}{\longrightarrow} E
\end{equation*}
equals $\det( h_{ij})_{i,j \neq k}$.
\end{proposition}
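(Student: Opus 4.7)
The plan is to realize $\deg(f_k)$ as an intersection number on $\mathrm{Jac}(C)\cong E^g$ and then convert that intersection into linear algebra via the hermitian-form dictionary recalled in the Introduction. Let $F_k := pr_k^{-1}(P)$ for a generic point $P\in E$; since $f_k$ is the restriction of $pr_k$ to $C$, the degree of $f_k$ equals the intersection number $[C]\cdot F_k$ computed in $E^g$. Because $C$ sits in its principally polarized Jacobian via an Abel--Jacobi map, Poincar\'e's formula gives $[C] = \theta^{g-1}/(g-1)!$, and therefore
\begin{equation*}
\deg(f_k) \;=\; \frac{\theta^{g-1}\cdot F_k}{(g-1)!}.
\end{equation*}

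The next step is to compute this intersection on the canonical lift, where $\cO_K$-equivariant divisor classes on $E^g$ correspond to hermitian forms on $\cO_K^g$. Under this dictionary $\theta$ corresponds to the unimodular form $H=(h_{ij})$, while $F_k$, being a point class on $E$ pulled back by $pr_k$, corresponds to the elementary matrix $E_{kk}$. The key compatibility is the degree formula $D^g = g!\,\det(M_D)$ for any divisor $D$ with associated form $M_D$. I would polarize this by considering $\theta + tF_k$, whose associated form is $H+tE_{kk}$; expanding
\begin{equation*}
(\theta + tF_k)^g \;=\; g!\det(H+tE_{kk}) \;=\; g!\det(H)\bigl(1 + t\,\mathrm{tr}(H^{-1}E_{kk}) + O(t^2)\bigr),
\end{equation*}
and reading off the coefficient of $t$ on each side yields $g\,\theta^{g-1}\cdot F_k = g!\det(H)\,\mathrm{tr}(H^{-1}E_{kk})$.

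Finally, unimodularity of the principal polarization gives $\det(H)=1$, and then $\mathrm{tr}(H^{-1}E_{kk}) = (H^{-1})_{kk} = \det(h_{ij})_{i,j\neq k}$ by Cramer's rule. Substituting back yields $\deg(f_k) = \det(h_{ij})_{i,j\neq k}$, as claimed. The main obstacle I anticipate is the second paragraph: making the dictionary between divisor classes and hermitian forms precise enough to justify both the identification $F_k \leftrightarrow E_{kk}$ and the degree formula $D^g = g!\det(M_D)$ in the present setup, using the canonical lift to reduce to the complex analytic picture. Once that translation is in place the remainder is essentially one line of linear algebra, and numerical invariants such as $\deg(f_k)$ are preserved under specialization back to $\FF_q$.
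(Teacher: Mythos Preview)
Your argument is correct and shares the paper's overall strategy: both identify $\deg(f_k)$ with the intersection number $[C]\cdot[F_k]$ where $F_k$ is a fiber of $pr_k$, invoke Poincar\'e's formula $[C]=\Theta^{g-1}/(g-1)!$, and then translate into the hermitian-form dictionary. The difference is only in the final computation. The paper restricts $\Theta$ to the fiber $F_k\cong E^{g-1}$, notes that the restricted hermitian form is exactly the minor $(h_{ij})_{i,j\neq k}$, and applies Riemann--Roch on $E^{g-1}$ to obtain $(\Theta|_{F_k})^{g-1}/(g-1)!=\det(h_{ij})_{i,j\neq k}$ in one stroke. You instead work globally on $E^g$, assign to $F_k$ the elementary matrix $E_{kk}$, polarize the identity $D^g=g!\det(M_D)$, and finish with Cramer's rule. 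The paper's restriction argument is a touch more direct and never needs unimodularity of $H$; in fact your appeal to $\det(H)=1$ is unnecessary, since $\det(H)\,(H^{-1})_{kk}$ is already the cofactor $\det(h_{ij})_{i,j\neq k}$ for any invertible $H$. Conversely, your polarization trick has the virtue of extending verbatim to intersections of $\Theta^{g-1}$ with divisor classes other than coordinate fibers.
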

\begin{proof}
We denote the abelian variety $E^g$ by
$E_{1}\times \ldots \times E_{g}$, where $E_{i}=E$,
and consider the first projection. 
The degree of the map $f_1$ equals the intersection 
number $[C]\cdot[E_2 \times \ldots \times E_{g}]$. 
The cohomology class $[C]$ of $C$ in an appropriate cohomology theory
is $[\Theta^{g-1}/(g-1)!]$. Recall that
if $L$ is a line bundle on an abelian variety $A$ of dimension $g$ 
then by the Riemann-Roch theorem one has $(L^g/g!)^2=\deg(\varphi_{L})$,
and  $\deg(\varphi_{L})=\det(r_{ij})^2$, where the matrix
$(r_{ij})$ gives the hermitian form corresponding to the first Chern class
of the line bundle $L$.
Since the hermitian form $(h_{ij})_{i,j \neq 1}$ corresponds 
to the line bundle $\Theta|_{E_2 \times \ldots \times E_{g}}$
on the abelian variety $E_2 \times \ldots \times E_{g}$ the degree of 
$f_1$ is given by  
$$
[C]\cdot[E_2 \times \ldots \times E_{g}]=
\frac{1}{(g-1)!}(\Theta|_{E_2 \times \ldots \times E_{g}})^{g-1}=
\det((h_{ij})_{i,j \neq 1}).
$$
\end{proof}
\end{section}
\begin{section}{Properties the Automorphism Group}

In this section we prove that an optimal curve of genus $3$ over a finite field with the discriminant is $-19$ is not hyperelliptic.
Furthermore, we prove that there exists either a maximal or a minimal curve.

From the table of classification of hermitian modules with discriminant $-19$  along with 
the lemma \ref{nonhyperelliptic} proved that an order of an automorphism group of an optimal curve of genus $3$ over a finite field 
with the discriminant $-19$ is $6$.

\begin{proposition}
There exists an optimal curve $C$ of
genus $3$ over $\FF_{q}$,  namely the double covering of a  
maximal or minimal elliptic curve respectively.
\end{proposition}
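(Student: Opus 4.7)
The plan is to combine the classification of rank-$3$ unimodular hermitian $\cO_K$-modules of discriminant $-19$ (alluded to at the start of this section) with the categorical equivalence recalled in the introduction and with Proposition \ref{proj}. First I would pick from the classification table the irreducible hermitian form $(h_{ij})$ on $\cO_K^3$, where $K=\QQ(\sqrt{-19})$. By Serre's equivalence this form yields an indecomposable principally polarized abelian variety $(A,\theta)$ with $A\cong E^3$, for $E$ the maximal (resp.\ minimal) elliptic curve furnished by Proposition \ref{elliptic}.

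Next I would invoke the classical fact that in dimension at most $3$ every indecomposable principally polarized abelian variety is the Jacobian of a smooth projective curve. Applied over $\overline{\FF}_q$, this produces a genus-$3$ curve whose polarized Jacobian is $(A,\theta)$; by Lemma \ref{nonhyperelliptic} this curve is non-hyperelliptic, so the strong form of Torelli's theorem gives unique descent to an $\FF_q$-model $C$ with ${\rm Jac}(C)\cong E^3$ as principally polarized abelian varieties. Since $E$ is maximal (resp.\ minimal), the Frobenius eigenvalues on ${\rm Jac}(C)$ all satisfy $\alpha_i+\bar\alpha_i=\mp [2\sqrt{q}]$, so $C$ attains the Hasse-Weil-Serre bound and is optimal.

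To identify $C$ as a double cover of $E$, I would apply Proposition \ref{proj}: the degree of the $k$-th projection $f_k:C\hookrightarrow {\rm Jac}(C)\cong E^3 \to E$ equals the principal $2\times 2$ minor $\det\bigl((h_{ij})_{i,j\neq k}\bigr)$ of the Gram matrix. A direct inspection of the form selected from the classification table should give this determinant equal to $2$ for (at least) one index $k$, exhibiting $C$ as a degree-$2$ cover of $E$.

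The main obstacle is the descent step from $\overline{\FF}_q$ to $\FF_q$: the existence theorem for Jacobians of indecomposable principally polarized abelian threefolds produces a curve only over the algebraic closure, so one must leverage the uniqueness of the Torelli correspondence in the non-hyperelliptic case to pull the $\FF_q$-structure on $(A,\theta)$ back to an $\FF_q$-structure on $C$. The numerical verification that the relevant $2\times 2$ minor equals $2$ is the only concretely computational point and is routine once the Gram matrix has been fixed by the hermitian module classification.
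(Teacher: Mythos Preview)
Your approach mirrors the paper's: invoke the classification of irreducible unimodular hermitian $\cO_K$-modules (there is a unique one in rank $3$ for $K=\QQ(\sqrt{-19})$), pass via Serre's equivalence and Oort--Ueno to an optimal genus-$3$ curve, and then read off the degree-$2$ cover from a $2\times 2$ minor of the Gram matrix using Proposition~\ref{proj}. The paper does exactly this with the explicit Gram matrix
\[
\begin{pmatrix} 2 & 1 & -1\\ 1 & 3 & \tfrac{-3+\sqrt{-19}}{2}\\ -1 & \tfrac{-3-\sqrt{-19}}{2} & 3\end{pmatrix},
\]
computing $\det(h_{ij})_{i,j\neq 1}=9-7=2$, just as you anticipate.

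There is, however, a logical circularity in your descent argument. You invoke Lemma~\ref{nonhyperelliptic} to conclude that the curve produced over $\overline{\FF}_q$ is non-hyperelliptic, and then use strong Torelli to descend. But Lemma~\ref{nonhyperelliptic} is stated for an optimal curve $C$ already defined over $\FF_q$, and its proof uses precisely the double cover $f_1:C\to E$ furnished by the present proposition, together with the parity of $\#C(\FF_q)$. So you cannot cite it here. The paper sidesteps this by appealing directly to Oort--Ueno for the $\FF_q$-existence without isolating a descent step. If you want to make the descent explicit, you must establish non-hyperellipticity by an argument independent of this proposition; and even then, bear in mind that in the non-hyperelliptic case strong Torelli only identifies $\aut(C)$ with an index-$2$ subgroup of $\aut({\rm Jac}(C),\theta)$, so the $\FF_q$-model you obtain may have Jacobian $E_{\max}^3$ or its quadratic twist $E_{\min}^3$. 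Either way the curve is optimal, which is all the proposition asserts, but your sentence ``unique descent \ldots\ with ${\rm Jac}(C)\cong E^3$'' overstates what Torelli gives.
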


\begin{proof}
The equivalence of categories as described in the Introduction tells us that a polarization of the Jacobian corresponds to a class of irreducble unimodular hermitian forms. 
According to the classification \cite{AS} of unimodular hermitian modules, there is a unique class of irreducible unimodular hermitian forms. This class can be represented by the unimodular hermitian matrix below.

$$
\left(
\begin{array}{ccc}
2 & 1 &-1 \\
1 & 3 & \frac{-3+\sqrt{-19}}{2}\\
-1 & \frac{-3-\sqrt{-19}}{2}&3\\
\end{array}
\right).
$$
Therefore by the Theorem of Oort and Ueno \cite{FO}, there exists a unique $\FF_q$-isomorphism class of optimal curves over $\FF_q$. By Proposition \ref{proj} the degree of  $f_{1} : C \to E$ is equal to the determinant

$$
{\rm det}\left(
\begin{array}{cc}
 3 & \frac{-3+\sqrt{-19}}{2}\\
 \frac{-3-\sqrt{-19}}{2}&3\\
\end{array}
\right)
$$
which is $2$. Hence $C$ is a double covering of an optimal
elliptic curve, as desired. 
\end{proof}

Now we show an optimal curve of genus $3$ is not hyperelliptic. 
\begin{lemma}{\label{nonhyperelliptic}}
Let $C$ be an optimal curve of genus $3$ over a finite field $\FF_q$ with discriminant $-19$. Then $C$ is non-hyperelliptic.
\end{lemma}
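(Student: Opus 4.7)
The plan is to argue by contradiction. Suppose $C$ is hyperelliptic, and let $\iota$ denote its hyperelliptic involution. Then $\iota$ acts on ${\rm Jac}(C)\cong E^3$ as $[-1]$, so it is central in ${\rm Aut}({\rm Jac}(C))$. By the previous proposition, $C$ admits a degree-$2$ morphism to $E$; let $\sigma$ be the corresponding involution. Since $\iota$ and $\sigma$ commute, they generate a Klein four-group $G:=\langle\iota,\sigma\rangle\subset{\rm Aut}(C)$, with third involution $\tau:=\iota\sigma$.

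Reading off the eigenspaces of $G$ on $H^0(C,\Omega_C)$: $\iota$ acts as $-1$ on the whole space, and $\sigma$ has a one-dimensional $(+1)$-eigenspace (since $C/\sigma=E$ has genus $1$), so $\tau$ has a two-dimensional $(+1)$-eigenspace, giving $g(C/\tau)=2$. Set $C'':=C/\tau$. Applying Riemann--Hurwitz to $\pi\colon C\to C''$ of degree $2$ yields $\deg R=0$, so $\tau$ acts freely and $\pi$ is an \'etale double cover. In particular ${\rm Jac}(C'')\sim E^2$, and Proposition~\ref{genustwo19} identifies $C''$ as the unique optimal genus-$2$ curve over $\FF_q$.

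Now I translate the \'etale cover into the hermitian setup. The pullback $\pi^*$ identifies the hermitian $\cO_K$-lattice of ${\rm Jac}(C'')$ with a primitive rank-$2$ sublattice of $(\cO_K^3,H)$; because $\pi_*\pi^*=[2]$, the polarization hermitian form $H$ restricts on this sublattice to $2\,H_{C''}$, where $H_{C''}$ is the principal (unimodular) hermitian form for ${\rm Jac}(C'')$. Hence the hyperellipticity hypothesis would force $(\cO_K^3,H)$ to contain a primitive rank-$2$ sublattice of determinant $4$, which---since $H$ is unimodular---amounts to the existence of a primitive vector $w\in\cO_K^3$ with $H(w,w)=4$.

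The concluding step, which I expect to be the main obstacle, is a direct verification that $H$ represents no vector of norm $4$. Positive-definiteness makes the search finite, and since the norm form $a^2+ab+5b^2$ on $\cO_K=\ZZ[(1+\sqrt{-19})/2]$ takes only the values $0,1,4,5,7,9,\ldots$, a short case analysis on the possible norm triples $(|v_1|^2,|v_2|^2,|v_3|^2)$ shows that $H(v,v)\in\{0,2,3,5,7,8,9,\ldots\}$ and never equals $4$. This contradicts the existence of the required sublattice, so $C$ cannot be hyperelliptic.
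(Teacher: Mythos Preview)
Your argument agrees with the paper through the Klein four-group setup and the deduction that $\pi:C\to C'':=C/\tau$ is an \'etale double cover of an optimal genus-$2$ curve. The paper then finishes with a one-line parity count: an \'etale double cover forces $\#C(\FF_q)$ to be even (each $\FF_q$-point of $C''$ has fibre consisting of either two $\FF_q$-points or a conjugate pair), while $\#C(\FF_q)=q+1\pm 3m$ is odd because $m^2-4q=-19$ makes both $m$ and $q$ odd.

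Your hermitian-lattice route, by contrast, has a real gap at the translation step. For an \'etale double cover the pullback $\pi^*:{\rm Jac}(C'')\to{\rm Jac}(C)$ is \emph{not} injective on points; its kernel is the order-$2$ subgroup generated by the $2$-torsion line bundle that defines the cover. Consequently the induced lattice map $\cO_K^2\to\cO_K^3$ does not hit a \emph{primitive} sublattice, and $H$ does not restrict on the saturated image to $2H_{C''}$. In fact that saturation is the orthogonal complement of the norm-$2$ vector arising from the (ramified, hence injective) inclusion $E\hookrightarrow{\rm Jac}(C)$, so its determinant is $2$, not $4$; since $H$ visibly represents $2$, no contradiction follows and the norm-$4$ search is beside the point. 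A valid lattice-style contradiction is hiding exactly where your argument slips: the kernel $\ZZ/2\ZZ$ of $\pi^*$ would have to be $\cO_K$-stable (as $\pi^*$ commutes with Frobenius, which generates $\cO_K$), yet $2$ is inert in $\QQ(\sqrt{-19})$, so $\cO_K$ admits no module of order $2$. Either that observation or the paper's parity argument closes the proof; the determinant-$4$ claim does not.
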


\begin{proof}
For the sake of contradiction suppose that $C$ is a hyperelliptic curve. Then there are two involutions, the first involution $\tau$ is the hyperelliptic involution and the second involution $\sigma$ corresponds to the double cover $f_1: C \rightarrow E$ from the previous proposition. So $C/\langle \sigma \rangle$ is an optimal elliptic
curve and $C/\langle \tau \rangle$ is a projective line. 
The subgroup $\langle \sigma, \tau \rangle$ is isomorphic to $\ZZ/2\ZZ \times \ZZ/2\ZZ $ and 
we have the following diagram of coverings
$$
\xymatrix{
 &\ar[ld]_{2:1}C\ar[d]^{2:1}\ar[rd]^{2:1}&\\
 C/\langle \sigma \rangle \cong E\ar[rd]^{2:1}& C/\langle \sigma  \tau \rangle \ar[d]^{2:1} & \PP^1 \ar[ld]_{2:1} \\
&\PP^1&\\}
$$
Furthermore the formal relation of groups
$$
2\cdot\frac{1}{4} \{id, \tau, \sigma, \sigma \tau \} +\{id \}=\frac{1}{2}\{id, \sigma\}+\frac{1}{2}\{id, \tau\}+\frac{1}{2}\{id, \sigma \tau\}
$$
implies the relation between idempotents in ${\rm End}({\rm Jac}(C))$
(see  \cite{KR}) and therefore we have an isogeny
\begin{equation}
{\rm Jac}(C) \sim {\rm Jac}(C/\langle \sigma \rangle) \times {\rm Jac}(C\langle \sigma \circ \tau \rangle).
\end{equation}

From the isogeny above and Hurwitz' formula,  it follows that $C \rightarrow C/\langle \sigma \rangle$ is an unramified 
double covering. Therefore the number of rational points $\#C(\FF_q)$ is even. On other hand
$\#C(\FF_q)=q+1 \pm 3m$ is odd since $m$ is odd. 
\end{proof}



Next lemma shows that a given finite field $\FF_q$ with discriminant $-19$ cannot admit minimal and  maximal curves simultaneously.

\begin{lemma} 
Let $\FF_q$ be a finite field with discriminant $-19$. Then $\FF_q$ cannot admit minimal and  maximal curves simultaneously. 

\end{lemma}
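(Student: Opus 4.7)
The plan is to argue by contradiction: suppose that over $\FF_q$ there exist both a maximal curve $C$ and a minimal curve $C'$ of genus $3$. By the preceding proposition their Jacobians are ${\rm Jac}(C)\cong E_{\max}^3$ and ${\rm Jac}(C')\cong E_{\min}^3$, each carrying the unique unimodular hermitian polarization $h$. Since $h_K=1$ and both optimal elliptic curves have endomorphism ring $\cO_K$, they become isomorphic over $\bar\FF_q$, and so the two polarized threefolds are isomorphic over $\bar\FF_q$. I would then apply Torelli's theorem to obtain an isomorphism $\phi\colon\bar C\to\overline{C'}$ and set $\alpha:=\phi^{-1}\circ\phi^{(q)}\in\aut(\bar C)$, the descent cocycle presenting $C'$ as a Galois twist of $C$.

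The second step is to pin down the image $\alpha_J\in\aut({\rm Jac}(\bar C),\theta)$ of $\alpha$ under Torelli. On ${\rm Jac}(C)=E_{\max}^3$ the Frobenius endomorphism acts as $\pi\cdot I_3$, where $\pi$ is a root of $x^2+mx+q=0$, while on ${\rm Jac}(C')=E_{\min}^3$ it acts as $-\pi\cdot I_3$, because $E_{\min}$ is the quadratic twist of $E_{\max}$ and their Frobenius endomorphisms differ by the only nontrivial unit $-1\in\cO_K^{\times}$. Transporting the minimal Frobenius back to ${\rm Jac}(\bar C)$ through $\phi$ and comparing with the maximal Frobenius then forces $\alpha_J=[-1]$.

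To conclude, I would invoke the standard consequence of Torelli's theorem for non-hyperelliptic curves: no automorphism of a non-hyperelliptic curve induces $[-1]$ on the Jacobian, since such an automorphism would have to act as $-{\rm id}$ on $H^0(C,\omega_C)$ and therefore fix the canonical image of $C$ (a smooth plane quartic in $\PP^2$) pointwise, forcing it to be the identity. Combined with Lemma \ref{nonhyperelliptic}, which guarantees that $C$ is non-hyperelliptic, this contradicts $\alpha_J=[-1]$. The delicate step, which I expect to be the main obstacle, is verifying the functoriality of Galois twists under Torelli, so that the curve-side cocycle $\alpha$ really does descend to the polarized-Jacobian-side cocycle $\alpha_J$; once this compatibility is in place, everything reduces to the Frobenius comparison above.
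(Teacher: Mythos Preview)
Your proposal is correct and follows essentially the same route as the paper. The paper phrases the argument in terms of the two semi-linear ``Frobenius automorphisms'' $F_M,F_m$ on $\FF_{q^2}(C)$ and deduces $F_m=-F_M$ on the Jacobian from $F_m^2=F_M^2$, whereas you phrase it in terms of the descent cocycle $\alpha=\phi^{-1}\circ\phi^{(q)}$ and compute $\alpha_J=[-1]$ directly from the fact that the Frobenii on $E_{\max}$ and $E_{\min}$ are $\pi$ and $-\pi$; these are the same computation, and both finish by invoking Torelli together with Lemma~\ref{nonhyperelliptic}.
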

\begin{proof}

Suppose there exist a maximal curve $C_{M}$ and a minimal curve $C_{m}$ over  $\FF_q$. Then 
${\rm Jac}(C_M\times_{\FF_{q}}\FF_{q^{2}})\cong {\rm Jac}(C_m\times_{\FF_{q}}\FF_{q^{2}})$ and hence we have an $\FF_{q^2}$-isomorphism $(C_M\times_{\FF_{q}}\FF_{q^{2}})\cong (C_m\times_{\FF_{q}}\FF_{q^{2}})$. 

We denote $C_M\times_{\FF_{q}}\FF_{q^{2}}$ by $C$.
Then there are automorphisms $F_M$ and $F_m$ on $C$ which are induced by corresponding Frobenius endomorphisms. In other words 
if $\FF_q(C_M) \cong \FF_q(x,y)$ and $\FF_q(C_m)\cong\FF_q(u,w)\subset \FF_{q^{2}}(C)$ then $\FF_{q^{2}}(C)=\FF_q(x,y)$,

\begin{displaymath}
F_M:
\ \left\{ \begin{array}{ll}
\FF_{q^{2}}(C)\longrightarrow \FF_{q^{2}}(C) &\\ 
\frac{\sum \alpha_{ij}x^{i}y^{j}}{\sum \beta_{lm}x^{l}y^{m}}\longmapsto \frac{\sum \alpha_{ij}^{q}x^{i}y^{j}}{\sum \beta_{lm}^{q}x^{l}y^{m}},
\end{array}\right.
\end{displaymath}
and

\begin{displaymath}
F_m:
\ \left\{ \begin{array}{ll}
\FF_{q^{2}}(C)=\FF_{q^{2}}(u,w)\longrightarrow \FF_{q^{2}}(u,w) &\\ 
\frac{\sum \alpha_{ij}u^{i}w^{j}}{\sum \beta_{lm}u^{l}w^{m}}\longmapsto \frac{\sum \alpha_{ij}^{q}u^{i}w^{j}}{\sum \beta_{lm}^{q}u^{l}w^{m}},
\end{array}\right.
\end{displaymath}

From the  construction of the automorphisms $F_M, F_m$ it follows that
the quotient curves $C/{\la F_M\ra}$ and $C/{\la F_m \ra}$ are defined over $\FF_q$ and
$$\FF_{q}(C/{\la F_M\ra})=\FF_{q^{2}}(C)^{\la F_M\ra}=\FF_q(C_M),$$
$$\FF_{q}(C/{\la F_m\ra})=\FF_{q^{2}}(C)^{\la F_m\ra}=\FF_q(C_m).$$

The automorphisms $F_m$ and $F_M$ induce automorphisms on ${\rm Jac}(C)$ which we, by abuse of notation,  denote by $F_m$ and $F_M$, respectively.
In ${\rm End_{\FF_{q^2}}({\rm Jac}(C))}$ we have the relation $F_m^2=F_{M}^2$  and hence $F_m=-F_{M}$, since the two are distinct. On the other hand the automorphism $F_m$ and $F_M$ induce two different 
automorphisms of $C$. Therefore Torelli's Theorem $C$ must be a hyperelliptic curve. But we showed that this is impossible in Lemma \ref{nonhyperelliptic}.  
\end{proof}

\end{section}

\begin{section}{Equations of Optimal Curves of Genus $3$}
In this section we combine the theoretical results which we derived in order to produce optimal curves of genus $3$ over finite fields with discriminant $-19$.

\begin{theorem}\label{equations}
Let $C$ be an optimal curve over $\FF_q$. Then $C$ can be written in one of the following forms:
$$
\begin{array}{l}
 \left\{ 
\begin{array}{l}
z^{2}=\alpha_0+\alpha_1x+\alpha_2x^{2}+\beta_0y,  \\
y^{2}=x^{3}+ax+b,\\
\end{array}
\right.
\\

\\

\left\{ \begin{array}{ll}
z^{2}=\alpha_0+\alpha_1x+\alpha_2x^{2}+(\beta_0+\beta_1x)y, & \\
y^{2}=x^{3}+ax+b, 
\end{array}\right.
\\

\\
\left\{ \begin{array}{ll}
z^{2}=\alpha_0+\alpha_1x+\alpha_2x^{2}+\alpha_3x^{3}+(\beta_0+\beta_1x)y, & \\
y^{2}=x^{3}+ax+b, 
\end{array}\right.
\\
\end{array}
$$
with coefficients in $\FF_q$ and the equation $y^{2}=x^{3}+ax+b$ corresponding to an optimal elliptic curve.
\end{theorem}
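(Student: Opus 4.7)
The plan is to translate the geometric fact established in the preceding section --- that $C$ is a non-hyperelliptic genus-$3$ double cover of an optimal elliptic curve $E$ --- into an explicit polynomial model, and then to use Riemann--Hurwitz to constrain the degrees of the defining coefficients.

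First I would fix a Weierstrass equation $E: y^{2}=x^{3}+ax+b$ for the optimal elliptic curve covered by $C$. Because the characteristic of $\FF_q$ is odd (the rational prime $2$ is inert in $K=\QQ(\sqrt{-19})$, so no ordinary elliptic curve with CM by $\cO_K$ can exist in characteristic $2$), the degree-$2$ extension $\FF_q(C)/\FF_q(E)$ is a Kummer extension, so $\FF_q(C)=\FF_q(E)(z)$ with $z^{2}=g$ for some $g\in\FF_q(E)^{\times}$. Using the decomposition $\FF_q(E)=\FF_q(x)\oplus \FF_q(x)\,y$, I would write $g=A(x)+B(x)\,y$; replacing $z$ by $h(x)\,z$ for a suitable $h\in\FF_q(x)^{\times}$ changes $g$ to $g\cdot h(x)^{2}$ without altering the cover, which lets me assume $A,B\in\FF_q[x]$ share no common squared polynomial factor. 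This already places the defining equations of $C$ into the shape $\{z^{2}=A(x)+B(x)\,y,\ y^{2}=x^{3}+ax+b\}$.

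Next, I would apply Riemann--Hurwitz to the double cover $\pi:C\to E$: since $g_C=3$ and $g_E=1$, the ramification divisor $R$ of $\pi$ has degree $4$, and a point $P$ of $E$ is ramified precisely when $v_P(g)$ is odd. The only pole of $g$ on $E$ lies at infinity and has order $P_\infty:=\max(2\deg A,\ 2\deg B+3)$ (the two contributions have opposite parity at $\infty$, so no cancellation occurs), whose parity decides whether $\infty$ is a ramification point. In the affine chart, the zeros of $g$ project to the roots of the norm polynomial $N(g):=A(x)^{2}-B(x)^{2}(x^{3}+ax+b)\in\FF_q[x]$, and odd multiplicity of a root of $N(g)$ corresponds to a ramified finite point of $E$. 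Enforcing $\deg R=4$ together with the parity of $P_\infty$ cleanly partitions the analysis into three cases $P_\infty\in\{4,5,6\}$, which force $(\deg A,\deg B)=(2,0)$, $(\le 2,1)$, and $(3,\le 1)$ respectively; these are exactly the degree data of Forms $1$, $2$, and $3$.

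The hard part will be the normalization step: verifying that the square-multiplications $g\mapsto g\cdot h(x)^{2}$ suffice to reduce to a representative with $\deg A\le 3$ and $\deg B\le 1$. If one had $P_\infty\ge 7$, then $N(g)$ would have degree $\ge 7$ but contribute at most $4$ ramified finite points, so most of its roots would have to have even multiplicity; this should in turn produce a nontrivial common squared factor of $A$ and $B$ that can be cleared by an $h(x)^{2}$, contradicting the minimality achieved at the start. Some care is also needed at the Weierstrass points of $E$ (those with $y=0$), where the local parameter is $y$ rather than $x-x_0$ and the multiplicity of a zero of $g$ is not simply the multiplicity of the corresponding root of $N(g)$. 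Once these normalization details are settled, the case analysis above delivers the three stated forms.
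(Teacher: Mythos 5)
Your route is genuinely different from the paper's. The paper works on $C$ itself: it sets $D=f^{-1}(\infty')$, pins down $\dim L(2D)$ via Riemann--Roch and Clifford, splits into cases according to whether $\dim L(2D)=3$, or $D=Q_1+Q_2$ with $Q_1\neq Q_2$, or $D=2Q$, and in each case extracts the equation from a linear dependence among explicit monomials lying in a space $L(nD)$ of known dimension. You instead work downstairs on $E$: Kummer theory gives $z^2=g=A(x)+B(x)y$ (your observation that the characteristic is odd because $2$ is inert in $\QQ(\sqrt{-19})$ is correct), and Riemann--Hurwitz gives a ramification divisor of degree $4$. Your bookkeeping at infinity is also right: $v_{\infty'}(A)=-2\deg A$ and $v_{\infty'}(By)=-2\deg B-3$ have opposite parities, so the pole order of $g$ at $\infty'$ is $\max(2\deg A,\,2\deg B+3)$, and the values $4,5,6$ of this maximum reproduce exactly the three stated forms. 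In principle this is a viable alternative to the paper's argument.

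However, the normalization step --- the one you yourself flag as the hard part --- does not work as proposed. The function $g$ is only determined up to multiplication by squares of \emph{arbitrary} elements of $\FF_q(E)^{\times}$, not merely by squares $h(x)^2$ with $h\in\FF_q(x)$, and the condition that $A$ and $B$ have no common squared polynomial factor does not force the pole at $\infty'$ to be small. Concretely, for a line function $\ell=y-\lambda x-\mu$ on $E$ one has $\ell^2=(x^3+ax+b+(\lambda x+\mu)^2)-2(\lambda x+\mu)y$, so replacing $g$ by $g\ell^2$ raises the pole order by $6$ while in general creating no common factor of the new $A$ and $B$; equivalently, an even-order zero of $g$ at a non-Weierstrass point $P$ need not be matched at $-P$, so it yields a repeated root of $N(g)=A^2-B^2(x^3+ax+b)$ without yielding $A(x_0)=B(x_0)=0$. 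Hence the contradiction you hope for ("a nontrivial common squared factor that can be cleared") does not materialize, and the reduction to $\max(2\deg A,2\deg B+3)\le 6$ is not established. The step can be repaired, but by linear equivalence on $E$ rather than by polynomial gcd's: write $\mathrm{div}(g)=D_{\mathrm{odd}}+2D'-P_\infty\infty'$ with $D_{\mathrm{odd}}$ reduced of degree $3$ or $4$; since this divisor is principal, its image $\sigma(D_{\mathrm{odd}})+2\sigma(D')$ under the group law vanishes, so $Q_0:=\sigma(D')\in E(\FF_q)$ satisfies $\sigma(D_{\mathrm{odd}})+2Q_0=O$ and the divisor $D_{\mathrm{odd}}+2(Q_0)-(\deg D_{\mathrm{odd}}+2)\infty'$ is principal, equal to $\mathrm{div}(g')$; one checks $g'=c\,g\,h^2$ with $c\in\FF_q^{\times}$, and $c^{-1}g'$ is an admissible representative with pole order $\deg D_{\mathrm{odd}}+2\le 6$. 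Without this (or an equivalent) replacement for your gcd argument, the proof is incomplete.
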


\begin{proof}
Let $C$ be an optimal curve of genus 3 over a finite field $\FF_q$ and let $f:C\rightarrow E$ be a double covering of $C$ with the equation $y^{2}=x^{3}+ax+b$. 
Set $D = f^{-1}(\infty')=\sum_{P|\infty'}e(P|\infty')\cdot P \in{\rm Div}(C)$, where $\infty'\in E$ lies over $\infty\in \PP^{1}$ by the action $E\rightarrow\PP^{1}$, ${\rm deg \,}D=2$. 

By Riemann-Roch Theorem
$${\rm dim \,}D={\rm deg \,}D+1-g+{\rm dim \,}(W-D)={\rm dim \,}(W-D),$$
where $W$ is a canonical divisor of the curve $C$. Consequently, $D$ is equivalent to the positive divisor $W-D_1$, where ${\rm deg \,}D_1=2$. Conclude ${\rm dim \,}D={\rm dim \,}(W-D)<{\rm dim \,}W=3$. Taking into account that $C$ is a non-hyperelliptic curve and ${\rm deg \,}D=2$, we conclude ${\rm dim \,}D=1$. 

Consider the divisor $2D$. By Clifford's Theorem
$${\rm dim \,}2D\leq 1+ \frac{1}{2}{\rm deg \,}2D.$$
Therefore, ${\rm dim \,}2D\leq 3$.

We separate the proof into three cases. 
\begin{enumerate}

\item Suppose ${\rm dim \,}2D=3$. 

Then there exist linearly independent elements $1,x,z' \in L(2D)$.
Seven elements $1,x,x^{2},y,z',(z')^{2},zx$ lie in the vector space $L(4D)$. Since ${\rm dim \,}4D=6$, then there exists relation
$$a_1z'^{2}+a_2z'+a_3z'x=a_4+a_5x+a_6x^{2}+a_7y,$$
where $a_1,a_2,a_3,a_4,a_5,a_6,a_7\in \FF_q$. Recall that $a_1\neq 0$, otherwise the equation for $z'$ over $k(x,y)$ will be of degree 1, which is a contradiction, since $[k(C):k(x,y)]=2$. Dividing both parts of the equation by $a_1$ and making the substitution $z=z'+(\frac{a_2}{a_1} + \frac{a_3}{a_1}x)/2$, we obtain the equation 
$$z^{2}=\alpha_0+\alpha_1x+\alpha_2x^{2}+\beta_0y.$$

\item Suppose ${\rm dim \,}(2D)=2$ and $D=Q_1+Q_2$, where $Q_1\neq Q_2$, $Q_1,Q_2\in C(\FF_q)$.

Then we have ${\rm dim \,}(2D+Q_1)=3$, by Riemann-Roch Theorem. 
The elements $1,x,x^{2},y,z,z^{2},xz,yz,xz^{2}\in L(4D+2Q_1)$ are linearly dependent since ${\rm dim \,}(4D+2Q_1)=8$ and  $ x\in L(2D),\, z\in L(2D+Q_1),\, y\in L(3D)$. 
Therefore,  
$$z^{2}(\alpha_0+\alpha_1x)+z(\beta_0+\beta_1x+\beta_2y)+(\gamma_0+\gamma_1x+\gamma_2x^{2}+\delta y)=0.$$
Denoting the expressions in brackets by $\varphi_1, \varphi_2, \varphi_3$ respectively, we  rewrite the expression above as 
$$z^{2}\varphi_1+z\varphi_2+\varphi_3=0.$$
Knowing that $\varphi_1 = \alpha_0+\alpha_1x\neq0$ (otherwise $v_{P_1}(x)=0$) and
 the equation above can be rewritten as 
$$(z+\frac{\varphi_2}{2\varphi_1})^{2}+\frac{\varphi_3}{\varphi_1}-\frac{\varphi_2^{2}}{4\varphi_1^{2}}=0.$$
After appropriate substitutions we get the desired equation
$$z^{2}=\alpha_0+\alpha_1x+\alpha_2x^{2}+\alpha_3x^{3}+(\beta_0+\beta_1x)y.$$

\item Suppose ${\rm dim \,}(2D)=2$ and $D=Q_1+Q_2=2Q$, where $Q_1=Q_2=Q\in C(\FF_q)$.


In order to manage this case we prove that the elements $1,x,z,y,x^{2},z^{2},xy,xz$ are linearly dependent. As a corollary of this fact we obtain the equation of the second type.

In this case the functions $x\in L(2D),\, y\in L(3D)$ have pole divisors
$(x)_\infty =4Q,\,(y)_\infty =6Q$, and 
there is a function $z\in L(2D+Q)$ such that  
$(z)_\infty =5Q$.

The element $z$ is an integral element over $\FF_q[x,y]$. 
Indeed,
either 
$$
1,x,z,y,x^{2},z^{2},xy,xz \in L(10D)
$$ 
or
$$
1, x,y,z,x^2, zx, xy, z^2, zy, x^3, zx^2, xyz, z^3 \in L(15Q)
$$
are linearly dependent and in both cases we have relations with nonzero leading coefficients at $z$.
This yields that $z$ is integral over $\FF_q[x,y]$.

It is clear that $z \not \in \FF_q(x,y)$ (otherwise $2$ divides $v_{Q}(z)=5$).

The minimal polynomial of $z$ has degree $2$ and coefficients in $\FF_q[x,y]$, since the degree of extension
$[\FF_q(C): \FF_q(x,y)]$ is $2$.
Therefore we have that
$$
z^2+\sum_{i \ge 0} a_{i} zyx^{i}+ \sum_{j \ge 0} b_{j} zx^{j} +\sum_{l \ge 0} c_{l}x^{l} +\sum_{s \ge 0} d_{s} yx^{s}=0,
$$
and hence
\begin{equation}\label{minpolynomial}
\begin{array}{c}
z^2+c_0+c_1 x+c_2 x^2+d_{0} y+b_0 z + b_{1} zx+d_1 xy =\\
=-z(b_2 x^2 + \ldots)+
zy(a_0+a_1 x + \ldots)+
(c_4 x^4 + \ldots)+
y(d_2 x^2 + \ldots).\\
\end{array}
\end{equation}
Furthermore, we have 
  \begin{itemize}
   \item{$v_Q(zx^i) =-5-4i \equiv3 $ {\rm mod} $4$}
   \item{$v_Q(zyx^j) =-5-6-4i \equiv 1 $ {\rm mod} $4$}
   \item{$v_Q(x^l) =-4l \equiv 0 $ {\rm mod} $4$}
   \item{$v_Q(yx^i) =-6-4i \equiv 2 $ {\rm mod} $4$.}
 \end{itemize}
If the right part of the equation \ref{minpolynomial} is non-zero, then we can apply the strict triangle inequality. As a consequence we get that on the one hand
$$
v_{Q}(z^2+c_0+c_1 x+c_2 x^2+d_{0} y+b_0 z + b_{1} zx+d_1 xy ) \le -11
$$
and on the other hand
$$
v_{Q}(z^2+c_0+c_1 x+c_2 x^2+d_{0} y+b_0 z + b_{1} zx+d_1 xy ) \ge -10.
$$
Therefore the right part of the equation above is zero, i.\ e.\
the elements  $1,x,z,y,x^{2},z^{2},xy,xz$ are linearly dependent.

\end{enumerate}
\end{proof}

\begin{example}

We produce examples of optimal curves over finite fields with discriminant $-19$.
It suffices to find either a maximal or  a minimal curve as their existence is mutually exclusive.

\begin{tabular}{|c||c|c|}
\hline
$q$& Maximal optimal curve & Minimal optimal curve\\
\hline
${47}$&
\begin{tabular}{c}
$y^2=x^3+x+38$, \\
$z^{2}=5+45x+30x^{2}+10y$\\ 
\end{tabular}
& -\\
\hline
${61}$&
\begin{tabular}{c}
$y^2=x^3+6x+29$,\\
$z^{2}=2+35x+48x^{2}+6y$\\
\end{tabular}
& - \\
\hline
${137}$&
\begin{tabular}{c}
$y^2=x^3+x+36$,\\
$z^{2}=3+85x+82x^{2}+45y$\\
\end{tabular}
& - \\
\hline
${277}$& 
\begin{tabular}{c}
$y^2=x^3+2x+61$,\\
$z^2=x^2+33x+212+5y$\\
\end{tabular}
& -\\
\hline
${311}$& -&
\begin{tabular}{c}
$y^2=x^{3}+x+261$,\\
$z^2=140+46x+11x^{2}+78y$\\
\end{tabular}
 \\
\hline
${347}$&-  &
\begin{tabular}{c}
$y^2=x^{3}+2x+251$,\\
$z^2=182+74x+2x^{2}+5y$\\
\end{tabular}
\\
\hline
${467}$&
\begin{tabular}{c}
$y^2=x^{3}+2x+361$,\\
$z^2=259+209x+6x^{2}+10y$\\
\end{tabular} &- \\
\hline
${557}$&-  &
\begin{tabular}{c}
$y^{2}=x^{3}+2x+151$,\\
$z^{2}=439+322x+5x^{2}+122y$ \\
\end{tabular}

\\
\hline
${761}$& 
\begin{tabular}{c}
$y^{2}=x^{3}+4x+105$,\\
$z^{2}=406+131x+3x^{2}+247y$ \\
\end{tabular} &-
\\ 
\hline

${997}$&-  &
\begin{tabular}{c}
$y^2=x^3+500x+934$,\\
$z^2=x^2+336x+564+196y$ \\
\end{tabular}

\\
\hline

\end{tabular}

\end{example}
\end{section}


\bibliographystyle{plain}
\def\cprime{$'$} \def\cprime{$'$} \def\cprime{$'$}

\end{document}